\def\@rmrk#1#2{\refstepcounter
    {#1}\@ifnextchar[{\@yrmrk{#1}{#2}}{\@xrmrk{#1}{#2}}}
\makeatletter\@addtoreset{equation}{section}\makeatother
 \newfont{\bfit}{cmbxti10 scaled 1200}
 \newcommand{\eps}{\varepsilon}
 \newcommand{\ball}{{\mathcal B}}
 \newcommand{\R}{\mathbb{R}}
 \newcommand{\prob}{\mathbb{P}}
 \newcommand{\1}{{\sf 1}}
 \newcommand{\skrib}{{\mathcal B}}
 \newcommand{\heap}[2]{\genfrac{}{}{0pt}{}{#1}{#2}}
 \newcommand{\sfrac}[2]{ \, \mbox{$\frac{#1}{#2}$}}
\renewcommand{\subsection}{\secdef \subsct\sbsect}
\newcommand{\subsct}[2][default]{\refstepcounter{subsection}
\vspace{0.15cm}
{\flushleft\bf \arabic{section}.\arabic{subsection}~\bf #1  }
\nopagebreak\nopagebreak}
\newcommand{\sbsect}[1]{\vspace{0.1cm}\noindent
{\bf #1}\vspace{0.1cm}}
\newtheorem{theorem}{Theorem}[section]
\newtheorem{lemma}[theorem]{Lemma}
\newtheorem{prop}[theorem]{Proposition}
\renewcommand{\P}{\mathbb{P}}
\newcommand{\E}{\mathbb{E}}
\newtheoremstyle{thm}{1.5ex}{1.5ex}{\itshape\rmfamily}{}
{\bfseries\rmfamily}{}{2ex}{}
\newtheoremstyle{rem}{1.3ex}{1.3ex}{\rmfamily}{}
{\itshape\rmfamily}{}{1.5ex}{}
\theoremstyle{rem}
\def\thebibliography#1{\section*{Bibliography}
  \list%
  {\arabic{enumi}.}%                          *** style of reference number ***
    {\settowidth\labelwidth{[#1]}\leftmargin\labelwidth
    \advance\leftmargin\labelsep
    \parsep0pt\itemsep0pt
    \usecounter{enumi}}
    \def\newblock{\hskip .11em plus .33em minus .07em}
    \sloppy                   % \clubpenalty4000\widowpenalty4000
    \sfcode`\.=1000\relax}
\begin{document}

\title[The most visited sites of planar Brownian motion]
{On the most visited sites of planar Brownian motion}

\centerline{\LARGE \bf On the most visited sites of} 
\ \\[-4mm]
\centerline{\LARGE \bf planar Brownian motion}

\vspace{0.5cm}

\thispagestyle{empty}
\vspace{0.2cm}
\centerline{\textsc{Valentina Cammarota}\footnote{
 Department of Statistics,
``Sapienza" University of Rome,
P.\,le Aldo Moro, 5 -- 00185 Rome, Italy.\\
E--mail:  \, \texttt{valentina.cammarota@uniroma1.it}.}
\,   and  \, \textsc{Peter M\"orters}\footnote{
Department of Mathematical Sciences,
University of Bath, Bath BA2 7AY, England.\\
E--mail:  \, \texttt{maspm@bath.ac.uk}.}}

\vspace{0.3cm}

%\centerline{\small(Draft of \version)}
%\vspace{0.5cm}

\begin{quote}{\small {\bf Abstract:} Let $(B_t \colon t \ge 0)$ be a planar Brownian motion
and define gauge functions $\phi_\alpha(s)=\log(1/s)^{-\alpha}$ for $\alpha>0$. If $\alpha<1$ we show that
almost surely there exists a point~$x$ in the plane such that ${\mathcal H}^{\phi_\alpha}(\{t \ge 0 \colon B_t=x\})>0$,
but if $\alpha>1$ almost surely ${\mathcal H}^{\phi_\alpha} (\{t \ge 0 \colon B_t=x\})=0$ simultaneously for all $x\in\R^2$. 
This  resolves a longstanding open problem posed by S.\,J.~Taylor in 1986.}
\end{quote}
\vspace{0.5cm}

{\footnotesize
{\bf MSc classification (2010):} 60J65.\\[-5mm] 

{\bf Keywords:} Brownian motion, Hausdorff dimension, Hausdorff gauge, exact Hausdorff measure,
local time, point of infinite multiplicity,  random fractal, uniform dimension estimates.}

% \tableofcontents

%%%%%%%%%%%%%%%%%%% Introduction %%%%%%%%%%%%%%%%%%%%%%%%%%%%%%%

\section{Introduction and statement of main results}

Let $(B_t \colon t \ge 0)$ be a standard planar Brownian motion.  Dvoretzky, Erd\H{o}s and Kakutani~(1958) first showed 
that, almost surely, there exist points~$x$ in the plane such that $\{t \ge 0 \colon B_t=x\}$, the set of times where
the Brownian path visits~$x$, is uncountably infinite.  Modern proofs of this fact are given in Le Gall~(1987) 
and M\"orters and Peres~(2010). The result naturally raises the question: {How large can the sets $\{t \ge 0 \colon B_t=x\}$ 
be? Kaufman's famous dimension-doubling 
theorem  %, see for example~Theorem~9.28 in M\"orters and Peres~(2010), 
implies that, almost surely, for all points~$x$ in the plane this set has Hausdorff dimension zero.% 
\smallskip

S.~James Taylor in his influential survey paper `The measure theory of random fractals'
of 1986 raises the problem in terms of Hausdorff gauge functions. Letting
$\phi\colon(0,\eps) \to [0,1]$ be a right continuous, increasing function with
$\phi(0+)=0$,  we denote 
$${\mathcal H}^\phi(E)=\lim_{\delta\downarrow 0} \inf\Big\{ \sum_{i=1}^\infty \phi({\rm diam}(E_i)) \colon E \subseteq \bigcup_{i=1}^\infty E_i, 
0<{\rm diam}(E_i)<\delta\Big\},$$ 
the $\phi$-Hausdorff measure of the set $E\subset\R$. 
%and by $E_x:=\{t\geq 0 \colon B_t=x\}$ the set of times the processes spends in the site~$x$,
Problem~5 in Taylor~(1986) is the following question:

\begin{quote}Which gauge functions~$\phi$ are such that, almost surely, we have
${\mathcal H}^\phi(\{t\geq 0 \colon B_t=x\})=0$ for all $x\in\R^2$?
\end{quote}

In the paragraph following this question Taylor focuses on the gauge functions of the form $\phi_\alpha(s)=\log(1/s)^{-\alpha}$ for $\alpha>0$, moving to
a weaker but still challenging form of the problem. He notes that the results of Perkins and Taylor~(1987) imply that 
these functions satisfy the above condition for $\alpha>2$
and states that 

\begin{quote}It is not much more than a guess, but my hunch is that $\phi_\alpha$ satisfies the condition for $\alpha>1$,
but not for $0<\alpha<1$.
\end{quote}

The weaker form of the problem is reiterated in Perkins and Taylor~(1987) as Problem~3.11. More than fifteen years
later Xiao~(2004) notes that the problem is still open even in the weaker form. The problem has recently been reiterated as
Problem~5 in the open problems section of M\"orters and Peres~(2010). It is the aim of the present paper to solve the weaker form of the 
problem and to confirm Taylor's conjecture.
\medskip

Although the solution involves both a new lower and upper bound, only the lower bound involves substantial work.
It relies on the construction of an intersection local time for points of infinite multiplicity, 
which is due to Bass, Burdzy and Khoshnevisan~(1994). Denoting by $\ball(x,\eps)\subset\R^2$ the open disc with centre~$x$ 
and radius~$\eps$ and by $\ball$ the open disc $\ball(0,1)$ we let $N^x_\eps$ be the number of times the Brownian path $(B_t \colon t\geq 0)$ travels from the 
point~$x \in \skrib$ to the circle~$\partial \ball(x,\eps)$ before it leaves $\ball$ for the first time. Naturally for
most points $x$ we have $N^x_\eps=0$, but some points satisfy
\begin{equation}\label{lt=a}
\lim_{\eps\downarrow 0} \frac{N^x_\eps}{\log(1/\eps)}=a,
\end{equation}

for some $a>0$. If~\eqref{lt=a} holds we say that Brownian motion `spends $a$ units of local time at~$x$'.
Note that the points~$x$ are by definition points of infinite multiplicity. Bass, Burdzy and Khoshnevisan~(1994)
in their Theorem~1.1 show that, for $0<a<\frac12$ there exists a (random) measure~$\beta_a$ on the plane such that~\eqref{lt=a}
holds for $\beta_a$-almost every~$x$. Moreover the measure is nonzero and has carrying Hausdorff dimension $2-a$ almost surely.
\medskip

Our main result shows that the points selected according to $\beta_a$ also have a large inverse image under
the Brownian motion, thus providing a lower bound for Taylor's Problem~5. 
\medskip

\begin{theorem}\label{lower}
Let $0<\alpha<\frac12$ and take $\beta_a$ to be the measure on points with local time~$a$. 
Define the gauge function
$$\varphi(\eps):= \frac{\log\log\log(1/\eps)}{\log (1/\eps)}.$$
Then, almost surely, 
$${\mathcal H}^\varphi\big(\{t\geq 0 \colon B_t=x\}\big)\geq a\,\sqrt{\sfrac\pi2}>0,$$
for $\beta_a$-almost every $x$.
\end{theorem}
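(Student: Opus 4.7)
The approach is via the mass distribution principle: for $\beta_a$-almost every $x$ we will construct a finite Borel measure $\mu_x$ supported on $Z_x := \{t\in[0,\tau_\ball] : B_t = x\}$ satisfying (i) $\mu_x(Z_x) = a\sqrt{\pi/2}$ and (ii) $\limsup_{r\downarrow 0}\mu_x((t-r,t+r))/\varphi(2r) \leq 1$ for $\mu_x$-almost every $t$. Once these are in place, a standard covering argument gives $\sum_i\varphi(\diam E_i) \geq (1-o(1))\,\mu_x(Z_x)$ for any cover of $Z_x$ by sets of sufficiently small diameter, whence ${\mathcal H}^\varphi(Z_x) \geq a\sqrt{\pi/2}$.

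\textbf{Construction of $\mu_x$.} The natural candidate is an intrinsic local time of the Brownian path at $x$. Write $N^x_\eps(t)$ for the number of upcrossings from $x$ to $\partial\ball(x,\eps)$ completed by time $t$, so that $N^x_\eps(\tau_\ball) = N^x_\eps$. The first step is to prove that for $\beta_a$-almost every $x$, the rescaled counts
$$\ell^x_t := \lim_{\eps\downarrow 0} \frac{N^x_\eps(t)}{\log(1/\eps)}$$
converge uniformly on $[0,\tau_\ball]$ to a continuous, nondecreasing function with $\ell^x_{\tau_\ball} = a$ and $\supp(d\ell^x) = Z_x$. The uniform convergence will come from an $L^2$ estimate along $\eps_n = 2^{-n}$ combined with a modulus-of-continuity bound, using moment calculations parallel to those underlying the Bass--Burdzy--Khoshnevisan construction of $\beta_a$. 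Setting $\mu_x(dt) := \sqrt{\pi/2}\,d\ell^x_t$ then makes (i) immediate, and $\mu_x$ is automatically carried by $Z_x$.

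\textbf{Density upper bound.} The crux is to upgrade the global identity $\ell^x_{\tau_\ball} = a$ to a local fluctuation estimate: for $\mu_x$-almost every $t$,
$$\ell^x_{t+r} - \ell^x_{t-r} \leq \sqrt{\tfrac{2}{\pi}}\,\frac{\log\log\log(1/r)}{\log(1/r)} \quad\text{for all sufficiently small } r>0.$$
The triple logarithm is the signature of a law of the iterated logarithm for $\ell^x$ viewed from a $\mu_x$-typical base point. We plan to obtain this by a Borel--Cantelli argument along matched dyadic scales $(r_n,\eps_n)$ with $\log(1/\eps_n)\asymp\log(1/r_n)$, chosen so that planar Brownian excursions of temporal scale $r_n$ typically have spatial scale of order $\eps_n$. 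The key inputs will be a sharp first-moment estimate showing that, under the size-biased law obtained by integrating $\mu_x$ against $\beta_a$, the expected number of upcrossings of size $\eps_n$ falling in a window of length $2r_n$ is $\lesssim\log(1/\eps_n)\,\varphi(2r_n)$, together with a matching second-moment bound permitting Chebyshev and summability over $n$. The constant $\sqrt{2/\pi}$ is the mean of a half-normal and enters through the tail behaviour of 2D Brownian hitting times of small circles centred at $x$.

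\textbf{Main obstacle.} The density upper bound is where the substantive work lies. Because $\mu_x$ is carried by the very thin random set $Z_x$, classical LIL arguments for local-time increments do not transfer directly: one must work under the Palm measure of $\mu_x$, which in practice requires explicit conditional laws of the Brownian path given that $t$ is an approach time to the thick point $x$. Matching the constant $\sqrt{\pi/2}$ precisely, rather than only up to an unspecified multiplicative factor, demands leading-order rather than merely logarithmic control; we anticipate decomposing the path into independent excursions to and from $\ball(x,\eps_n)$ and performing exact Gaussian computations on each block. Once this is in hand, the mass distribution principle closes the argument and yields ${\mathcal H}^\varphi(Z_x)\geq a\sqrt{\pi/2}$ for $\beta_a$-almost every $x$.
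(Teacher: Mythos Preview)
Your high-level plan---build a local-time measure on $Z_x$, bound its upper $\varphi$-density via an LIL, then invoke the mass-distribution/Rogers--Taylor principle---is exactly the paper's strategy. But your proposed execution of the hard step is substantially rougher than the paper's, and the key structural insight is missing.

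The paper does not construct $\ell^x_t$ from upcrossing limits under $\mathbb P^0$. Instead it uses the Palm decomposition of $\beta_a$ from Bass--Burdzy--Khoshnevisan (their Theorem~5.2): integrating any functional of $(x,B)$ against $\beta_a(dx)\,\mathbb P^0(dB)$ equals, up to an explicit density, integrating against $\mathbb Q^x_a(dB)\,dx$, where $\mathbb Q^x_a$ is the law of Brownian motion conditioned to spend $a$ units of local time at the fixed point~$x$. Under $\mathbb Q^x_a$ the path is assembled from an $h$-process, a Poisson collection of excursions from $x$ with intensity $\mathrm{Leb}\otimes H^x$, and a final Brownian run; the local time $L^x_a$ is then \emph{defined} as the right-continuous inverse of the subordinator $\xi_t=\sum_{s<t}\sigma(e_s)$ of accumulated excursion lifetimes. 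No $L^2$ construction or uniform-convergence argument is needed.

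This matters because the LIL for $L^x_a$ now follows from an off-the-shelf theorem: Bertoin--Caballero's LIL for the inverse of a subordinator with slowly varying Laplace exponent. One computes $\Phi(\lambda)=\int(1-e^{-\lambda\sigma(e)})\,dH^x(e)$ and shows $\Phi(\lambda)\sim a\sqrt{\pi/2}\,\log\lambda$ as $\lambda\to\infty$; the constant comes from $H^x(\sigma(e)>\theta)=a\int h(\xi)\,p^{\ball}_\theta(x,\xi)\,d\xi\sim a\sqrt{\pi/2}\,\log(1/\theta)$, i.e.\ from integrating $|\log|x-\xi||$ against the planar heat kernel---not from a half-normal mean as you suggest. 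Bertoin--Caballero then delivers the sharp $\limsup$ equal to $a^{-1}\sqrt{2/\pi}$ with no further work, and Rogers--Taylor converts this into $\mathcal H^\varphi(Z_x)\ge a\sqrt{\pi/2}$.

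Your plan to obtain the density bound by first- and second-moment estimates plus Borel--Cantelli along dyadic scales would, even if carried through, likely yield only $\limsup\le C$ for some unidentified constant, giving $\mathcal H^\varphi(Z_x)>0$ but not the stated value $a\sqrt{\pi/2}$. You yourself flag that one must work under the Palm measure of $\mu_x$ and decompose into independent excursions; that Palm measure \emph{is} $\mathbb Q^x_a$, and once you are there the subordinator structure is what makes the sharp constant accessible without any ad hoc moment bounds.
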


The proof of Theorem~\ref{lower} is given in Section~2. To solve the weaker form of Taylor's Problem~5
we also need an upper bound confirming that the bound above has the right power of the logarithm. 
\smallskip

\begin{theorem} \label{upper}
For every gauge function $\phi$ with 
$\phi(\eps) \log (1/\eps) \to 0$, almost surely, 
%the level set~$L(x)$ has zero $\phi$-Hausdorff measure. 
$${\mathcal H}^\phi\big(\{t\geq 0 \colon B_t=x\}\big)=0\qquad \mbox{for every $x\in\R^2$.} $$
\end{theorem}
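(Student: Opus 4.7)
Fix $T>0$; by countable additivity on $T\in\N$ it suffices to show that almost surely ${\mathcal H}^\phi(\{t\in[0,T]\colon B_t=x\})=0$ for every $x\in\R^2$. The plan is to cover each level set $\{t\in[0,T]\colon B_t=x\}$ by the maximal time intervals $J^x_1,\ldots,J^x_{V^x_r}$ on which $B$ remains inside $\ball(x,r)$ and at least one visit to~$x$ takes place. Any $t$ with $B_t=x$ lies in the interior of one such interval, so the two key ingredients are simultaneous upper bounds in~$x$ for the count $V^x_r$ and for the largest length $M_r:=\sup\{|J^x_j|\colon x\in\R^2,\,1\le j\le V^x_r\}$, which would yield
\[
{\mathcal H}^\phi(\{t\in[0,T]\colon B_t=x\})\le V^x_r\,\phi(M_r).
\]

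For the length, I plan to use Borel--Cantelli based on the Gaussian tail $\prob(\sup_{s\in[0,\ell]}|B_s|\le 2r)\le e^{-c\ell/r^2}$: a union bound over $O(T/\ell)$ overlapping starting times followed by summation along a geometric sequence $r_n=2^{-n}$ gives
\[
M_r\le C_1\,r^2\log(1/r) \qquad \text{a.s.\ for all $r$ sufficiently small,}
\]
uniformly in $x$.

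For the count, the bound I need is
\[
\sup_{x\in\R^2}V^x_r\le C_2\log(1/r) \qquad \text{a.s.\ for all small }r.
\]
This step is the main obstacle. Pointwise, $V^x_r$ is comparable to the excursion count $N^x_r$ from~\eqref{lt=a} and is asymptotic to $a\log(1/r)$ at $a$-thick points, so the displayed bound amounts to saying that the maximal local-time density of planar Brownian motion at any site is finite. I would derive it from the uniform thick-point theorem of Dembo--Peres--Rosen--Zeitouni (2001), together with the standard comparison between occupation-time and excursion-count local times at scale $r$. The linear-in-$\log(1/r)$ form is essential: the hypothesis $\phi(\eps)\log(1/\eps)\to 0$ leaves no slack for any additional poly-logarithmic factor.

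Combining the two bounds, for every $x\in\R^2$ one has
\[
{\mathcal H}^\phi(\{t\in[0,T]\colon B_t=x\})\le C_2\log(1/r)\,\phi\!\left(C_1 r^2\log(1/r)\right).
\]
Setting $\delta:=C_1r^2\log(1/r)$ gives $\log(1/\delta)\sim 2\log(1/r)$, so the right hand side is at most $(C_2/2)(1+o(1))\log(1/\delta)\,\phi(\delta)$, which tends to~$0$ as $r\downarrow 0$ by the hypothesis on $\phi$. Letting $r\downarrow 0$ yields the conclusion.
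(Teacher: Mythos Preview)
Your covering strategy is in the same spirit as the paper's, but the execution diverges at precisely the point you flag as ``the main obstacle,'' and the paper's approach shows that this obstacle is avoidable.

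\textbf{What the paper does.} The paper never invokes thick-point results. Instead it works with dyadic cubes $Q$ of side $r=2^{-m}$ inside a large ball $\ball(0,R)$ and defines return times $\tau^Q_1<\tau^Q_2<\cdots$ to $Q$ with an enforced \emph{time gap of~$r$} between them. A short gambler's-ruin computation (Lemma~3.1) shows $\prob(\tau^Q_k<\tau)\le\theta^k$ for an absolute $\theta<1$: after waiting time $r$ the path is typically at distance $\asymp\sqrt{r}$ from $Q$, and from there the chance of escaping $\ball(0,R)$ before returning to $Q$ is bounded below independently of $r$. A union bound over the $4^m$ cubes and Borel--Cantelli (Lemma~3.2) then gives, almost surely, $\tau^Q_{Cm}>\tau$ for \emph{every} dyadic cube $Q$ at every scale $m$. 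By construction $\{t<\tau:B_t\in Q\}\subset\bigcup_{k<Cm}[\tau^Q_k,\tau^Q_k+r)$, so each level set is covered by at most $Cm$ intervals of length $2^{-m}$, and $Cm\,\phi(2^{-m})\to 0$ finishes the proof. This is entirely self-contained; no $M_r$ estimate is needed because the covering intervals have \emph{fixed} length $r$.

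\textbf{The gap in your route.} Your bound $\sup_x V^x_r\le C\log(1/r)$ does not follow cleanly from Dembo--Peres--Rosen--Zeitouni. Their theorem controls the occupation measure, $\sup_x\mu(\ball(x,r))\le(2+o(1))\,r^2(\log(1/r))^2$. The ``standard comparison'' you allude to would divide by a lower bound on the time each excursion reaching $x$ spends in $\ball(x,r)$; but the natural such lower bound is of order $r^2$ (the exit time of a ball of radius $r$), which yields only $V^x_r=O((\log(1/r))^2)$---one logarithm too many, and as you note the hypothesis on $\phi$ leaves no slack. Upgrading the per-excursion contribution to $r^2\log(1/r)$ uniformly in $x$ is itself a statement of the same strength as the one you are trying to prove. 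So the DPRZ detour, besides importing a deep theorem, does not obviously close.

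In short: the paper's fixed-time-gap device is the missing idea. It replaces your appeal to thick points by a one-line geometric decay estimate, and it removes the need to control $M_r$ separately.
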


The proof of Theorem~\ref{upper}, which uses a simple first moment estimate, is given in Section~3. Combining
Theorems~\ref{lower} and~\ref{upper} confirms Taylor's conjecture on the weaker form of Problem~5
as stated in the abstract. The strong form of the problem remains open at this point, see Section~4 for 
a new conjecture and some further remarks on this.

\section{Proof of the lower bound}

Let $\prob^z$ and $\E^z$ be the distribution and the corresponding expectation of planar Brownian motion 
started at~$z\in\R^2$, and let $\tau(A)$ be the first hitting time of a Borel set $A\subset\ball$. In
particular we let $\tau=\tau(\partial \skrib)$ be the first hitting time of the unit disc and denote by 
$p^{\skrib}_t(\cdot\,,\cdot\,)$ the transition (sub-)density for 
the Brownian motion killed at $\tau$.
\medskip

Let $B=(B_t \colon 0 \le t \le \tau)$ be the Brownian motion started at the
origin and killed upon leaving~$\ball$. The  idea of the proof of Theorem~\ref{lower}
is to use the representation of $B$ as seen from a typical point chosen accordingly to $\beta_a$, which is given in Section~5
of Bass et al.~(1994). The main ingredient of this representation is the construction of the process $(Z^x_a(t) \colon 0\leq t \leq \tau)$ which we now recall.
\medskip

Fix some $x \in \skrib$ and let $h$ be a strictly positive harmonic function in $\skrib \setminus \{x\}$ with zero boundary values on $\partial \skrib$ and a pole at $x$ such that 
\begin{eqnarray} \label{h} \lim_{z\to x} \frac{h(z)}{|\log|z-x||} =1.\end{eqnarray}
The $h$-transform of $B$ is a Markov process in $\skrib \setminus \{x\}$ with transition density $p^h_t$ given by 
\begin{eqnarray} \label{p_h} p^h_t(z,y)= \frac{h(y)}{h(z)}\, p^\skrib_t(z,y). \end{eqnarray}
The distribution of this process started in $y\in\ball \setminus \{x\}$ is denoted $\P^y_h$. Let $C_*[0,\infty)$ be the set of all paths $e\colon [0, \infty) \to \ball \cup \{ \Delta\}$  
which are continuous on some interval $[0,\sigma)$ and then jump to the isolated coffin state $\Delta$. The canonical process on $C_*[0,\infty)$ will be denoted by~$X$, 
i.e. $X_t(e)=e_t$ for all  $e\in C_*[0,\infty)$ and $t\geq 0$. 
%Bass, Burdzy and Khoshnevisan~(1994) argue that 
There is an, up to a constant factor unique, positive and $\sigma$-finite measure $H^x$ on $C_*[0,\infty)$ such that 
\begin{itemize}
\item $\displaystyle\lim_{t\downarrow 0} X_t =x,  \mbox{ $H^x$-almost surely,}$\\[-1mm]
\item $H^x$ is strong Markov for the transition densities $p^{h}_t(\cdot\,,\cdot\,)$,\\[-1mm]
\item $H^x(\tau(A)<\infty)<\infty$, for any compact set $A\subset \ball\setminus\{x\}$.
\end{itemize}
The excursion law $H^x$ will be normalised so that\\[-1mm]
\begin{equation}\label{L5.1(i)} \lim_{\eps \downarrow 0} \frac{H^x (\tau(\partial \skrib(x,\eps))<\infty)}{|\log \eps|}=a, {\phantom{\Bigg\{ } }\end{equation}
see  Lemma 5.1 in Bass et al.~(1994) for existence of this normalisation, and Burdzy~(1987) for background on excursion laws.%
\medskip%

Let $\mbox{Leb}$ denote the Lebesgue measure on $[0,\infty)$ and $Y$ be a Poisson point process on $[0,\infty)\times C_*[0,\infty)$ with mean measure 
$\mathrm{Leb} \otimes H^x$. Bass et al.~(1994), Lemma 5.1, show that, for every $t< \infty$, 
\begin{eqnarray} \label{lifetime} \sum_{\heap{(s,e_s) \in Y}{ s < t}}\sigma(e_s) < \infty, \end{eqnarray}
almost surely, where $\sigma(e_s)$ is the lifetime of the excursion $e_s$.
\medskip

\pagebreak[3]

The trajectories of $Z^x_a$ are assembled from three parts,
\begin{equation*}
Z^x_a(t)=\left\{
\begin{array}{ll} Z_1(t) &\mbox{ if } 0 \le t \le t_1,\\
Z_2(t-t_1) & \mbox{ if }  t_1 \le t \le t_1+t_2,\\
Z_3(t-t_1-t_2) & \mbox{ if } t_1+t_2 \le t \le\tau= t_1+t_2+t_3.
\end{array}
\right.
\end{equation*}
\begin{enumerate}
\item $(Z_1(t) \colon 0 \le t \le t_1)$ is an $h$-process  in $\skrib \setminus \{x\}$ which starts from $0$ and is stopped when it approaches $x$ at time $t_1$.\\[-1mm]

\item For $u>0$ let 
$$T(u)=\sup \left\{t \colon \sum_{\heap{(s,e_s) \in Y}{ s < t}}\sigma(e_s) \le u\right\}.$$
By (\ref{lifetime}), $T(u)$ is well defined for all $u < \infty$ almost surely. Note that, almost surely, $T(u)< \infty$ for each $u$ and,
for almost all $u$, there is a point $(s,e_s)\in Y$ such that $s=T(u)$. For such $u$ we define
$$Z_2(u)=e_{T(u)} \left( u-\sum_{\heap{(s,e_s) \in Y}{ s < T(u)}} \sigma(e_s) \right)$$
and for the remaining $u$ we define $Z_2(u)=x$. 
Let $t_2=\sum_{s<1}\sigma(e_s)$ and observe that this defines a continuous process $(Z_2(t) \colon 0 \le t \le t_2)$.  \\[-1mm]

\item $(Z_3(t) \colon 0 \le t \le t_3)$ is a Brownian motion starting from $x$ and killed at the first exit from the unit ball~$\ball$.\\[-1mm]
\end{enumerate}
We assume that $Z_1$, $Z_2$ and $Z_3$ are independent. The distribution of the process $(Z^x_a(t) \colon 0 \le t \leq \tau)$ will be denoted 
by~${\mathbb Q}^x_a$. 

The process $Z^x_a$  may be thought of as a Brownian motion conditioned to spend $a$ units of local time at $x$. It is possible to interpret 
${\mathbb Q}^x_a$ as the distribution of $B$ conditioned by the event that $x$ is in the support of $\beta_a$. 
This follows from the   `Palm measure' decomposition of $\beta_a$ stated below and proved in Bass et al.~(1994), Theorem~5.2.
\smallskip

\begin{lemma} \label{Palm}
For every $a \in (0,\frac 1 2)$ and every nonnegative measurable function $f$ on $\mathbb{R}^2 \times C_*[0,\infty)$ we have 
$${\mathbb E}^0 \int f (y, B) \beta_a(d y)=\int_\skrib \int \1_{f(y,B)}  |1-y^2|^a |\log |y|| \; {\mathbb Q}_a^y(d B)\; dy.$$
\end{lemma}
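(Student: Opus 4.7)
The plan is to verify the Palm formula by an approximation and first-moment calculation, and then to pass to the limit. I would choose a family of approximating measures $\beta_a^\eps$, absolutely continuous with respect to Lebesgue measure, whose densities $\rho_a^\eps(y)$ are defined in terms of the excursion count $N^y_\eps$ from $y$ to $\partial\ball(y,\eps)$ before $\tau$; the natural choice, inspired by the construction of $\beta_a$ in Bass et al.~(1994), is to take $\rho_a^\eps$ as a suitably normalised indicator (or Gaussian weight) of the event $\{N^y_\eps \approx a|\log\eps|\}$. Fubini then reduces the required identity to verifying
\begin{equation*}
\E^0\big[\rho_a^\eps(y)\,\1\{B\in\Gamma\}\big] \;\longrightarrow\; |1-y^2|^a\,|\log|y||\,\mathbb{Q}_a^y(\Gamma)
\end{equation*}
as $\eps\downarrow 0$ for each Borel set $\Gamma \subset C_*[0,\infty)$, uniformly on compact subsets of $\ball$ bounded away from $\partial \ball$.

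To handle the left-hand side I would decompose the path $(B_t\colon 0\le t\le \tau)$ according to the three stages that appear in the definition of $Z^y_a$. The initial segment of $B$, from $0$ up to the first entry into $\ball(y,\eps)$, becomes the $h$-process $Z_1$ in the limit by Doob's transform using the harmonic function $h$ of (\ref{h}). The middle stage, consisting of the excursions of $B$ between successive visits to the small circle $\partial\ball(y,\eps)$, conditioned on $N^y_\eps \approx a|\log\eps|$ and with the excursion count viewed as a rescaled time parameter, converges in the sense of point processes to a Poisson point process on $[0,1]\times C_*[0,\infty)$ with intensity $\mathrm{Leb}\otimes H^y$ normalised as in (\ref{L5.1(i)}) -- exactly the object defining $Z_2$. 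Finally, the terminal excursion, which escapes from $\partial\ball(y,\eps)$ to $\partial\ball$, becomes Brownian motion started at $y$ killed at $\partial\ball$ by the strong Markov property, yielding $Z_3$. Independence of $Z_1,Z_2,Z_3$ in the limit follows by applying the strong Markov property at the separating hitting and last-exit times.

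The Lebesgue density $|1-y^2|^a|\log|y||$ of the Palm measure is then read off from two asymptotic facts. First, the probability that planar Brownian motion from $0$ ever enters $\ball(y,\eps)$ before $\partial\ball$ is $|\log|y||/|\log\eps| + o(1/|\log\eps|)$; the factor $|\log|y||$ emerges from this once the $|\log\eps|$ is absorbed into the normalisation of $\rho_a^\eps$. Second, each return of $B$ from $\partial\ball(y,\eps)$ to itself before exiting $\ball$ succeeds with probability $1 - c(y)/|\log\eps| + o(1/|\log\eps|)$, with $c(y)$ explicitly computable from the Green function of the unit disc; raising this geometric probability to the power $a|\log\eps|$ and simplifying for the disc yields the factor $|1-y^2|^a$. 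The principal obstacle is precisely this bookkeeping: obtaining local large-deviation asymptotics for $N^y_\eps$ that are uniform in $y$, and establishing enough uniform integrability to upgrade the weak convergence of the conditioned paths to genuine convergence of the expectations against a general test function $f$. Once this is in hand, the identity for $\beta_a^\eps$ passes to $\beta_a$ in the limit $\eps \downarrow 0$.
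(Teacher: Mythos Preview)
The paper does not supply its own proof of this lemma: immediately before the statement it says that the result ``follows from the `Palm measure' decomposition of $\beta_a$ stated below and proved in Bass et al.~(1994), Theorem~5.2.'' So there is nothing to compare your argument against within the present paper; the authors treat Lemma~\ref{Palm} as a black box imported from~\cite{BBK94}.

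That said, your outline is a sensible reconstruction of how such a Palm formula is typically established, and it tracks the structure of the $Z^x_a$ construction closely: approximate $\beta_a$ by measures carried on $\{y:N^y_\eps\approx a|\log\eps|\}$, decompose the conditioned path at the first hit of $\ball(y,\eps)$ and the last exit from it, and identify the three limiting pieces with $Z_1,Z_2,Z_3$. Your identification of the two factors in the density is on the right track: $|\log|y||$ from the hitting probability of a small disc in $\ball$, and $(1-|y|^2)^a$ from the geometric tail of the number of returns (via the explicit Green function of the disc). You are also honest that the hard work lies in the uniform local large-deviation estimate for $N^y_\eps$ and in the uniform integrability needed to pass expectations to the limit; those are exactly the technical ingredients developed at length in Sections~4--5 of Bass, Burdzy and Khoshnevisan~(1994), and your sketch would not be a proof without them. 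If you want to turn this into a self-contained argument you should consult that paper directly, since the present one does not reproduce any of those estimates.
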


Let $L^x_a$ be the right-continuous generalised inverse of the sum of the excursion lengths of $Z^x_a$ from $x$,
 $$L^x_a(t): =\sup \Bigl\{\theta \colon   \sum_{\heap{(s,e_s) \in Y}{ s < \theta}}\sigma(e_s)   \le t \Bigr\}.$$
  $L^x_a$ is the local time (in the sense of excursion theory) of $Z^x_a$ at $x$. The following proposition is the main ingredient of the proof of the lower bound. 
\smallskip

\begin{prop}\label{LILL}
For every $a \in (0, \frac 1 2)$ and $x \in \mathbb{R}^2$, we have 
\begin{equation}\label{LIL}
\limsup_{\eps \downarrow 0} \frac{L^x_a(t_1+\eps)}{\varphi(\eps)}= a^{-1}\, \sqrt{\sfrac2\pi} < \infty, \qquad  \mathbb{Q}^x_a\mbox{-almost surely,}
\end{equation}
where $\varphi(\eps):=\frac{\log \log \log(1/\eps)}{\log (1/\eps)}$.
\end{prop}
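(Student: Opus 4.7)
My plan is to identify $L^x_a(t_1+\cdot)$ with the inverse of an explicit subordinator built from the excursion Poisson point process $Y$ and then to carry out a Khintchine-style law of the iterated logarithm. Since the path $Z_1$ stays in $\skrib \setminus \{x\}$ up to time $t_1$, no excursion from~$x$ contributes to the clock before $t_1$, so $L^x_a(t_1)=0$ and for $u\geq 0$
\begin{equation*}
L^x_a(t_1+u) \;=\; T(u)\;:=\;\sup\Bigl\{\theta \ge 0 \colon S(\theta) \le u\Bigr\}, \qquad S(\theta) := \sum_{\heap{(s,e_s)\in Y}{s<\theta}} \sigma(e_s).
\end{equation*}
By the Poisson structure of $Y$, the process $(S(\theta))_{\theta\geq 0}$ is a pure-jump subordinator whose L\'evy measure $\nu$ is the push-forward of $H^x$ under $e\mapsto\sigma(e)$.

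The crucial input is the small-lifetime tail of $\nu$. Starting from the excursion normalisation~\eqref{L5.1(i)}, I would apply the strong Markov property of $H^x$ at the first hitting time of $\partial\skrib(x,\eps)$ and use the known exit laws of planar Brownian motion in small annuli to establish
\begin{equation*}
\nu\bigl((u,\infty)\bigr) \;=\; H^x(\sigma > u) \;\sim\; c_a \,\log(1/u) \qquad \text{as } u \downarrow 0,
\end{equation*}
for an explicit constant $c_a$ satisfying $c_a^{-1} = a^{-1}\sqrt{2/\pi}$. Pinning down the precise value of $c_a$ is the main obstacle, since~\eqref{L5.1(i)} controls only the rate at which excursions reach a prescribed circle, whereas the LIL constant is governed by the rate at which the excursion \emph{lifetime} exceeds a threshold. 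Converting between these requires a careful computation based on the Bessel-type radial dynamics of the $h$-process (with drift $-1/(r\log(1/r))$ near $x$), which is the point at which the Gaussian first-passage factor $\sqrt{2/\pi}$ enters.

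With this tail in hand, the upper bound follows by a Borel--Cantelli argument along a sub-sequence $\eps_n$ with $\log(1/\eps_n) = \lambda^n$ for a parameter $\lambda > 1$. For $c > 1/c_a$ the event $\{T(\eps_n)\ge c\,\varphi(\eps_n)\}$ is contained in the event that no point $(s,e_s) \in Y$ satisfies $s< c\,\varphi(\eps_n)$ and $\sigma(e_s)>\eps_n$; by the Poisson formula this has probability
\begin{equation*}
\exp\bigl(-c\,\varphi(\eps_n)\,H^x(\sigma>\eps_n)\bigr)\;\sim\;\exp\bigl(-c\,c_a \log\log\log(1/\eps_n)\bigr)\;=\;n^{-c\, c_a\,(1+o(1))},
\end{equation*}
which is summable. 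Monotonicity of $T$ and $\varphi$, together with sending $\lambda\downarrow 1$, extends the bound from $(\eps_n)$ to all $\eps\downarrow 0$ and yields $\limsup \le 1/c_a$, $\mathbb{Q}^x_a$-almost surely.

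For the matching lower bound I would apply the converse Borel--Cantelli lemma to the events on non-overlapping time windows $[c\varphi(\eps_{n+1}),\,c\varphi(\eps_n)]$ of the Poisson process $Y$, which are independent. For $c < 1/c_a$ the corresponding probabilities are no longer summable, so infinitely many windows contain no excursion of lifetime exceeding $\eps_n$. To finish the argument one must rule out the possibility that the sum of small excursions alone pushes $S(c\varphi(\eps_n))$ above $\eps_n$; this follows from the first-moment bound $\E[S(\theta)\,\1\{\sigma\leq u\}]=\theta\int_0^u s\,\nu(ds)=O(\theta u)$ and Markov's inequality, since $\theta=c\varphi(\eps_n)\to 0$ makes $\theta \eps_n$ negligible compared with $\eps_n$.
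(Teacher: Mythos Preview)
Your overall architecture coincides with the paper's: identify $L^x_a(t_1+\cdot)$ with the right inverse $T$ of the subordinator $S(\theta)=\sum_{s<\theta}\sigma(e_s)$, establish the tail $H^x(\sigma>\theta)\sim a\sqrt{\pi/2}\,\log(1/\theta)$, and deduce an iterated-logarithm law with constant $a^{-1}\sqrt{2/\pi}$. The two substantive differences are in how the tail constant is obtained and in how the LIL is proved.

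\textbf{Tail of the lifetime.} The paper does \emph{not} go through the radial $h$-process dynamics you propose. Instead it proves the exact identity
\[
H^x(\sigma>\theta)\;=\;a\int_{\skrib\setminus\{x\}} h(\xi)\,p^{\skrib}_\theta(x,\xi)\,d\xi,
\]
obtained by applying the strong Markov property of $H^x$ at the first hitting time of $\partial\skrib(x,\eps)$, writing $p^h_\theta(z,\xi)=h(\xi)p^{\skrib}_\theta(z,\xi)/h(z)$, and using \eqref{h} and the normalisation \eqref{L5.1(i)} to pass to the limit $\eps\downarrow 0$. A result of Port and Stone then lets one replace $p^{\skrib}_\theta$ by the free heat kernel on a small ball around $x$, after which the constant $a\sqrt{\pi/2}$ drops out of the elementary Gaussian integral $\int_0^\delta(-\log r)\,\theta^{-1}e^{-r^2/2\theta}\,r\,dr$. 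This bypasses entirely the obstacle you flag; no Bessel first-passage computation is needed.

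\textbf{The LIL.} The paper does not redo Borel--Cantelli at all: once $\Phi(\lambda)\sim a\sqrt{\pi/2}\,\log\lambda$ is established, it invokes the theorem of Bertoin and Caballero (after Fristedt--Pruitt) for subordinators with slowly varying Laplace exponent, which gives both inequalities at once. Your direct argument for the upper bound on the $\limsup$ is correct, but your lower-bound sketch has a genuine gap. On the event that the window $[c\varphi(\eps_{n+1}),c\varphi(\eps_n))$ carries no jump exceeding $\eps_n$, you still need $S(c\varphi(\eps_n))\le\eps_n$, and for that you must control the \emph{entire} earlier contribution $S(c\varphi(\eps_{n+1}))$, not merely the small jumps in the window. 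With your subsequence $\log(1/\eps_n)=\lambda^n$ one has $\E\,S(c\varphi(\eps_{n+1}))$ of order $\varphi(\eps_{n+1})\asymp\lambda^{-(n+1)}\log n$, while $\eps_n=\exp(-\lambda^n)$ is doubly exponentially small; the expected number of jumps of size $>\eps_n$ already present in $[0,c\varphi(\eps_{n+1}))$ is $\asymp (cc_a/\lambda)\log n\to\infty$. Hence $S(c\varphi(\eps_{n+1}))>\eps_n$ with overwhelming probability, and the window being empty of large jumps does not imply $T(\eps_n)\ge c\varphi(\eps_n)$. The disjoint-window independence trick, as stated, therefore fails to deliver the lower bound; this is precisely why the paper outsources the LIL to Bertoin--Caballero, whose proof handles this difficulty.
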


Before proving Proposition \ref{LILL}, we show how this implies Theorem \ref{lower}.  

First, the natural link between the Hausdorff  measure   ${\mathcal H}^\varphi\big(\{t\geq 0 \colon B_t=x\}\big)$ and the law of the iterated logarithm in (\ref{LIL})  
is the Rogers-Taylor theorem stated below,  see M\"orters and Peres~(2010), Proposition 6.44, for a proof.
 
\begin{lemma} \label{R-T}
Let $\mu$ be a Borel measure on $\mathbb{R}$ and let $\phi$ be a Hausdorff gauge function. If $\Lambda \subset \mathbb{R}$ is a closed set and 
$$A:= \left\{ t \in \Lambda:  \limsup_{\eps \downarrow 0} \frac{\mu[t,t+\eps]}{\phi(\eps)}<\alpha  \right\},$$
then ${\mathcal H}^{\phi}(A) \ge \frac{\mu(A)}{\alpha}$.
\end{lemma}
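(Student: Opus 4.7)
The plan is to run the local density bound built into the definition of $A$ over the sets of an arbitrary cover, one set at a time, and sum. Unpacking the $\limsup$ condition, for every $t\in A$ there exists $\eps_0(t)>0$ with $\mu([t,t+\eps])<\alpha\,\phi(\eps)$ for every $0<\eps\le\eps_0(t)$. Stratifying by the threshold, I define
$$A_n:=\Big\{t\in\Lambda:\ \mu([t,t+\eps])\le\big(\alpha-\tfrac 1n\big)\phi(\eps)\ \text{ for every }0<\eps\le\tfrac 1n\Big\},$$
which exhausts $A$ from below ($A_n\uparrow A$, hence $\mu(A_n)\uparrow\mu(A)$) and is Borel (the defining inequality need only be verified on rational $\eps$, using right-continuity of $\phi$ and of $\eps\mapsto\mu([t,t+\eps])$). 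By inner regularity of the locally finite Borel measure $\mu$, I pick a compact $K_n\subset A_n$ with $\mu(K_n)\ge\mu(A_n)-1/n$.

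For the cover estimate, fix $n$ and any $\delta<1/n$, and let $\{E_i\}_i$ be an arbitrary countable cover of $A$ with $\diam(E_i)<\delta$. Discard every $E_i$ with $E_i\cap K_n=\emptyset$; for each remaining $E_i$, set $t_i:=\inf(E_i\cap K_n)$, which lies in $K_n\subset A_n$ because $K_n$ is closed. The key one-sided containment $E_i\cap K_n\subset[t_i,t_i+\diam(E_i)]$ holds because every $s\in E_i\cap K_n$ satisfies $t_i\le s\le\sup E_i\le\inf E_i+\diam(E_i)\le t_i+\diam(E_i)$. Since $t_i\in A_n$ and $\diam(E_i)<1/n$, the defining inequality for $A_n$ yields
$$\mu(E_i\cap K_n)\le\mu\big([t_i,t_i+\diam(E_i)]\big)\le\big(\alpha-\tfrac 1n\big)\phi\big(\diam(E_i)\big).$$
Summing over $i$ and using $K_n\subset\bigcup_i E_i$ gives $\mu(K_n)\le(\alpha-\tfrac 1n)\sum_i\phi(\diam(E_i))$. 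Taking the infimum over all such covers yields ${\mathcal H}^\phi_\delta(A)\ge\mu(K_n)/(\alpha-\tfrac 1n)$; sending $\delta\downarrow 0$ and then $n\to\infty$ gives ${\mathcal H}^\phi(A)\ge\mu(A)/\alpha$.

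The main obstacle is the \emph{one-sided} nature of the density condition: only half-intervals $[t,t+\eps]$ enter the hypothesis, whereas a cover set $E_i$ containing a point of $A$ can straddle it on both sides. Selecting $t_i$ as the leftmost point of $E_i\cap K_n$ is what aligns $E_i$ with a one-sided interval of width $\diam(E_i)$ and, crucially, keeps the constant sharp (no factor of $2$ is lost). A secondary subtlety is that $A$ itself need not be closed, so the compact approximation $K_n\subset A_n$ is needed to ensure the infimum $t_i$ is actually attained inside $A_n$ and the defining inequality is genuinely available there.
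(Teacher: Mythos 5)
Your argument is correct. Note that the paper does not prove this lemma at all: it is quoted as the Rogers--Taylor density theorem with a citation to M\"orters and Peres (2010), Proposition~6.44. What you have written is essentially the standard proof behind that citation: stratify $A$ into the Borel sets $A_n$ on which the density bound holds uniformly at scale $1/n$ with margin $\alpha-\tfrac1n$, pass to a compact $K_n\subset A_n$ by inner regularity, and for an arbitrary $\delta$-cover $\{E_i\}$ compare each $E_i$ meeting $K_n$ with the one-sided interval $[t_i,t_i+\diam(E_i)]$ anchored at $t_i=\inf(E_i\cap K_n)\in K_n$, then sum and optimize in $\delta$ and $n$. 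Your two highlighted points are exactly the right ones: choosing the leftmost point of $E_i\cap K_n$ is what lets the one-sided hypothesis absorb the whole cover set without losing a factor of $2$, and the compact (or at least closed) approximation is what guarantees that this infimum lies in $A_n$. Two trivial housekeeping remarks: restrict to $n>1/\alpha$ so that $\alpha-\tfrac1n>0$ before dividing, and the inner-regularity step as phrased presumes $\mu(A_n)<\infty$ (if $\mu(A_n)=\infty$ take $K_n$ with $\mu(K_n)\ge n$ instead, which only strengthens the conclusion); in the paper's application $\mu=\ell^x_a$ is a probability measure, so this is moot. With those cosmetic caveats the proof is complete and gives the sharp constant needed in Section~2.
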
 
 
In our case we consider the closed set  $\Lambda_x:=\{t \ge 0: Z^x_a(t)=x\}$, the (probability)
measure $\ell_a^x$ given by $\ell_a^x[t,t+\eps]:=L_a^x(t+\eps)-L_a^x(t)$, the gauge function  $\varphi$ and the sets 
$$A_\alpha:=\left\{t \in \Lambda_x: \limsup_{\eps \downarrow 0} \frac{\ell_a^x[t,t+\eps]}{\varphi(\eps)} < \alpha\right\}.$$ 
Then, by  Lemma \ref{R-T}, we have ${\mathcal H}^{\varphi}(\Lambda_x)\ge {\mathcal H}^{\varphi}(A_\alpha)  \ge \frac 1 \alpha \; {\ell_a^x(A_\alpha)}$,
and we now show that,  $\mathbb{Q}_a^x$-almost surely, $\ell_a^x(A_\alpha)=1$ for suitable $\alpha>0$.
\medskip

For $0\leq t\leq 1$ let $T_t:=\inf \{s>0\colon L_a^x(s)\geq t\}<\infty$. 
By construction the process $(Z^x_a(T_t+s) \colon 0\leq s \leq T_1-T_t)$ has the same law
as $(Z^x_a(t_1+s) \colon 0\leq s \leq T_{1-t})$. Hence it follows from Proposition~\ref{LILL} that, for all $t \in (0, 1)$, 
$$\limsup_{\eps \downarrow 0} \frac{\ell_a^x[T_t, T_t+\eps]}{\varphi(\eps)} = a^{-1}\, \sqrt{\sfrac2\pi}
, \qquad  \mathbb{Q}^x_a\mbox{-almost surely.}$$
By applying Fubini's theorem we get 
$$\mathrm{Leb}\bigg\{ t \in (0, 1)\colon \limsup_{\eps \downarrow 0} \frac{\ell_a^x[T_t, T_t+\eps]}{\varphi(\eps)} = a^{-1}\, \sqrt{\sfrac2\pi}  \bigg\}=1,  \qquad  \mathbb{Q}^x_a\mbox{-almost surely,}$$
that is 
$$\ell_a^x \bigg\{s \in \Lambda_x \colon \limsup_{\eps \downarrow 0} \frac{\ell_a^x[s, s+\eps]}{\varphi(\eps)}= a^{-1}\, \sqrt{\sfrac2\pi} \bigg\}=1, \qquad  \mathbb{Q}^x_a\mbox{-almost surely.}$$
Hence ${\ell_a^x(A_\alpha)}=1$ for all $\alpha>a^{-1}\, \sqrt{2/\pi}$,
and so ${\mathcal H}^{\varphi}(\Lambda_x) \geq a\, \sqrt{\pi/2}>0$, $\mathbb{Q}_a^x$-almost surely.% 
\medskip%

To complete the proof of the lower bound we apply Lemma~\ref{Palm} to the nonnegative, measurable function 
$f(x, B)= \1_{\{(x,B)\colon{\mathcal H}^{\varphi}(\Lambda_x)<a\sqrt{\frac\pi2}\}}$. We have seen that
$$\int_\ball \int f(x, B) \; |1-x^2|^a |\log |x|| \; \mathbb{Q}_a^x(d B) \, dx=0,$$
and this implies that 
$$\mathbb{E}^0 \int f(x,B) \, \beta_a(d x)=0,$$
that is ${\mathcal H}^{\varphi}(\Lambda_x)\geq a\, \sqrt{\pi/2}$ for $\beta_a$-almost every $x$, $\mathbb{P}^0$-almost surely,
as required to prove Theorem~\ref{lower} subject to the proof of Proposition~\ref{LILL}.
\bigskip

We now prove Proposition \ref{LILL}. Bertoin and Caballero~(1995), making use of the work of Fristedt and Pruitt~(1971), have shown the 
following result, see their Theorem~1.
\smallskip

\begin{lemma} \label{F-P} 
Let $\xi$ be a subordinator, $\Phi$ its Laplace exponent and $S$ its  right-continuous generalised inverse. If $\Phi$ is slowly varying at infinity and $\Phi(\infty)=\infty$, then
$$\limsup_{t \downarrow 0} \frac{S(t)  \Phi(t^{-1} \log |\log \Phi(1/t)|  )}{ \log |\log \Phi(1/t)|}=1,  \qquad  \mbox{almost surely.}$$
\end{lemma}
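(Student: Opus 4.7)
Write $g(t):=t^{-1}\log|\log\Phi(1/t)|$ and $\phi(t):=\log|\log\Phi(1/t)|/\Phi(g(t))$ for the candidate normaliser; the assumption $\Phi(\infty)=\infty$ ensures both are defined for all $t$ sufficiently small. The backbone of the argument is the elementary duality
\[\{S(t)>s\}=\{\xi_s<t\}, \qquad \{S(t)\le s\}=\{\xi_s\ge t\},\]
which converts the iterated-logarithm statement into a pair of matching deviation estimates for $\xi$ at small times. One then proves $\limsup\le1$ and $\limsup\ge1$ separately, in the style of a classical Khinchin LIL.

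\textbf{Upper bound.} Exponential Chebyshev gives, for every $\lambda>0$,
\[\P(\xi_u<t)\le e^{\lambda t}\,\E\bigl[e^{-\lambda\xi_u}\bigr]=\exp\{\lambda t-u\Phi(\lambda)\}.\]
The distinguished choice $\lambda=g(t)$ produces $\lambda t-c\phi(t)\Phi(\lambda)=(1-c)\log|\log\Phi(1/t)|$, and hence
\[\P\bigl(S(t)>c\phi(t)\bigr)\le|\log\Phi(1/t)|^{\,1-c}.\]
Choose a discrete skeleton $t_n\downarrow0$ with $|\log\Phi(1/t_n)|\sim q^n$ for some fixed $q>1$ (possible by continuity and $\Phi(\infty)=\infty$). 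For any $c>1$ the probabilities along $(t_n)$ are summable, so the first Borel-Cantelli lemma removes $\{S(t_n)>c\phi(t_n)\}$ for all but finitely many $n$. The slow variation of $\Phi$ forces $\phi(t_{n+1})/\phi(t_n)\to 1$, so monotonicity of $S$ upgrades the statement along $(t_n)$ to the continuous $\limsup_{t\downarrow0}S(t)/\phi(t)\le 1$ almost surely.

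\textbf{Lower bound.} Here we must show that for every $c<1$, $S(t)>c\phi(t)$ happens infinitely often along some deterministic sequence. This requires a lower estimate on $\P(\xi_u<t)$ matching the exponential Chebyshev bound, of the form
\[\P(\xi_u<t)\ge\exp\{-(1+o(1))\,u\Phi(g(t))\}\]
in the window $u\asymp\phi(t)$. The standard route is a Cramér/Esscher exponential change of measure: tilting the law of $\xi_u$ by $\exp\{-\lambda\xi_u+u\Phi(\lambda)\}$ with $\lambda=g(t)$ shifts its effective mean to $u\Phi'(\lambda)$, and slow variation of $\Phi$ (giving control of $\Phi(\lambda+\mu)-\Phi(\lambda)-\mu\Phi'(\lambda)$) yields a local central-limit-type estimate under the tilt, so a quantifiable fraction of mass survives the reverse tilt. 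Granted this matching lower bound, one constructs on a sparse subsequence $(t_n)$ disjoint-increment events $A_n$ involving independent pieces of $\xi$ whose occurrence forces $S(t_n)>c\phi(t_n)$, and with $\sum\P(A_n)=+\infty$. The second Borel-Cantelli lemma then yields $\limsup_{n\to\infty}S(t_n)/\phi(t_n)\ge c$ almost surely; letting $c\uparrow 1$ along a countable sequence closes the argument.

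\textbf{Main obstacle.} The entire proof hinges on the sharp lower bound on $\P(\xi_u<t)$, which must match the exponential Chebyshev upper bound to logarithmic precision on the scale $\lambda t=\log|\log\Phi(1/t)|$. The hypothesis that $\Phi$ is slowly varying is indispensable here: it provides the smoothness of the Laplace exponent (effectively $\lambda\Phi'(\lambda)/\Phi(\lambda)\to 0$) that makes the Cramér inversion produce the correct exponential rate and allows one to ignore the contribution of $\Phi$'s derivative at the tilted point. This is the delicate technical heart of Fristedt-Pruitt (1971), which Bertoin-Caballero adapt to the present slowly varying setting; all other steps are standard Borel-Cantelli bookkeeping.
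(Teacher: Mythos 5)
The paper does not prove this lemma at all: it is quoted as Theorem~1 of Bertoin and Caballero (1995), which in turn builds on Fristedt and Pruitt (1971). Your sketch reproduces the standard LIL architecture (inversion $\{S(t)>s\}=\{\xi_s<t\}$, exponential Chebyshev plus first Borel--Cantelli one way, a matching small-deviation bound plus second Borel--Cantelli the other way), but as written it has two genuine gaps. The first is in the upper bound: the interpolation claim ``slow variation forces $\phi(t_{n+1})/\phi(t_n)\to1$'' is false for your skeleton. If $|\log\Phi(1/t_n)|\sim q^n$ then $\Phi(1/t_{n+1})\approx\Phi(1/t_n)^{q}$, and since $\Phi(g(t))$ is comparable to $\Phi(1/t)$ (Potter bounds; the extra factor in $g$ is only an iterated logarithm), the denominator of $\phi$ increases across each block by the unbounded factor $\approx\Phi(1/t_n)^{q-1}$ while the numerator ratio tends to $1$; hence $\phi(t_{n+1})/\phi(t_n)\to0$. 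In the case relevant to this paper, $\Phi(\lambda)\asymp\log\lambda$, one has $t_n\approx\exp(-e^{q^n})$ and $\phi(t_n)/\phi(t_{n+1})\approx e^{(q-1)q^n}$, so knowing $S(t)\le S(t_n)\le c\,\phi(t_n)$ for $t\in[t_{n+1},t_n]$ gives no control of $S(t)/\phi(t)$ on the block. The repair (and the route taken in the cited proofs) is to interpolate in the time variable of the subordinator: control the monotone functional $u\,\Phi(1/\xi_u)$ along a geometric sequence $u_n=r^n$, where passing from $u_n$ to $u\in[u_{n+1},u_n]$ only costs the factor $r$; the Borel--Cantelli estimate must then be redone along that sequence (it becomes of order $(n\log(1/r))^{-(1+\eps)}$, which is summable), rather than the bound $|\log\Phi(1/t_n)|^{1-c}$ along your super-sparse $t_n$.

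The second gap is that the lower bound is not actually proved: the matching estimate $\P(\xi_u<t)\ge\exp\{-(1+o(1))\,u\Phi(g(t))\}$ and the construction of the independent events (which must also ensure that the contribution of $\xi$ before each block is negligible, not merely that the block increment is small) are precisely the content of the cited theorem, and your text explicitly defers them to Fristedt--Pruitt/Bertoin--Caballero. Moreover, the suggested mechanism --- Esscher tilt plus a local CLT at the tilted point --- is ill-adapted to the slowly varying regime, where the tilted increment is governed by rare large jumps rather than Gaussian fluctuations; the natural argument here instead uses the Tauberian equivalence $\bar\Pi(x)\sim\Phi(1/x)$ as $x\downarrow0$ (valid precisely because $\Phi$ is slowly varying, and this is where the hypothesis really enters) and bounds $\P(\xi_u\le t)$ from below by the probability of having no jump of size larger than a small multiple of $t$ before time $u$, times a Markov-type bound on the small-jump part. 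So as it stands the proposal is an outline whose decisive steps are borrowed from the very reference the lemma is quoted from, together with one concretely incorrect chaining step.
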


In our case $\xi_t=  \sum_{s<t} \sigma(e_s)$ and for the Laplace exponent $\Phi$ of the right-continuous generalised inverse of the local time we have  
$$\begin{aligned}
\exp\{-t \Phi(\lambda)\}
& = \E\big\{\exp\{ - \lambda \sum_{\heap{(s,e_s)\in Y}{s<t}} \sigma(e_s)\}\big\}\\
& = \exp\Big\{ - t \int dH^x(e) \big(1-{\mathrm e}^{-\lambda \sigma(e)}\big) \Big\},
\end{aligned}$$
where  we used %Formula~
(3.29) in Kingman~(1993). Since 
\begin{align}\label{grossphi}
\Phi(\lambda)= \int dH^x(e) \, \big(1-{\mathrm e}^{-\lambda \sigma(e)}\big) = \int_0^1  \, d t \, H^x\big( \sigma(e) > \sfrac{-\log t}{\lambda} \big),
\end{align}
to get the asymptotics of $\Phi(\lambda)$ as $\lambda\uparrow\infty$ we first look at  $H^x(\sigma(e)>\theta)$ as 
$\theta\downarrow 0$.

\begin{lemma}
We have
$$H^x(\sigma(e)>\theta)= a \int_{\skrib \setminus \{x\}} h(\xi) \;  p_\theta^{\skrib}(x,\xi) \;  d \xi.\\[-3mm]$$
\end{lemma}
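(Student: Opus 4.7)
My plan is to identify the right-hand side as the total mass of the entrance law for the $h$-transformed excursion measure $H^x$, and to pin down the multiplicative constant using the normalisation~\eqref{L5.1(i)}.

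For each $\theta>0$, set $\mu_\theta(d\xi):=H^x(X_\theta\in d\xi)$; by \eqref{lifetime} this is a finite measure on $\skrib\setminus\{x\}$. The strong Markov property of $H^x$ for the transitions $p^h_t$, one of the defining properties of the excursion law, gives the Chapman--Kolmogorov identity
$$\mu_\theta(d\xi)=\int_{\skrib\setminus\{x\}}\mu_s(dy)\,p^h_{\theta-s}(y,\xi)\,d\xi\qquad(0<s<\theta),$$
so $(\mu_\theta)_{\theta>0}$ is an entrance law for $p^h_t$ issued from~$x$.

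The natural ansatz for the entrance density is $\rho_\theta(\xi):=h(\xi)\,p^{\skrib}_\theta(x,\xi)$. Using \eqref{p_h}, the symmetry of $p^{\skrib}_t$, and Chapman--Kolmogorov for $p^{\skrib}_t$,
$$\int \rho_s(y)\,p^h_{\theta-s}(y,\xi)\,dy=h(\xi)\int p^{\skrib}_s(x,y)\,p^{\skrib}_{\theta-s}(y,\xi)\,dy=h(\xi)\,p^{\skrib}_\theta(x,\xi)=\rho_\theta(\xi),$$
so $\rho_\theta\,d\xi$ is also an entrance law, and both laws concentrate at $x$ as $\theta\downarrow 0$. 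Since the construction of $H^x$ in Bass et al.~(1994) characterises it up to a scalar, uniqueness of the entrance law up to a constant yields $\mu_\theta=C\rho_\theta\,d\xi$ for some $C>0$, and integration gives $H^x(\sigma>\theta)=C\int h(\xi)\,p^{\skrib}_\theta(x,\xi)\,d\xi$.

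To show $C=a$, I would exploit \eqref{L5.1(i)}. Write $\tau_\eps:=\tau(\partial\skrib(x,\eps))$ for brevity. By the Markov property applied at a small time $\theta>0$ and the fact that the $h$-process a.s.\ converges to $x$ (so that $\P^y_h(\tau_\eps<\infty)=1$ for any $y$ outside $\skrib(x,\eps)$),
$$H^x(\tau_\eps<\infty)=H^x(\tau_\eps\le\theta)+C\int h(y)\,p^{\skrib}_\theta(x,y)\,\P^y_h(\tau_\eps<\infty)\,dy.$$
Writing $\P^y_h(\tau_\eps<\infty)=h(y)^{-1}\,\E^y[h(B_{\tau_\eps});\,\tau_\eps<\tau]$, using \eqref{h} to see that $h=|\log\eps|+O(1)$ on $\partial\skrib(x,\eps)$, and letting $\theta\downarrow 0$ so that $B^{\skrib}_\theta$ concentrates at $x$, a short asymptotic gives $H^x(\tau_\eps<\infty)=C|\log\eps|+O(1)$ as $\eps\downarrow 0$; comparing with \eqref{L5.1(i)} forces $C=a$.

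The main obstacle is this last step: controlling the $\theta\downarrow 0$ limit of $\int h(y)\,p^{\skrib}_\theta(x,y)\,\P^y_h(\tau_\eps<\infty)\,dy$ requires care, because $h(y)$ diverges at $x$ while the killed transition density $p^{\skrib}_\theta(x,\cdot)$ concentrates there. The first two paragraphs, by contrast, are routine $h$-transform bookkeeping.
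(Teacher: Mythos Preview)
Your entrance-law route is valid in spirit but differs from the paper, and the step you flag as the obstacle is precisely what the paper's argument avoids. The paper keeps $\theta>0$ fixed throughout and applies the strong Markov property of $H^x$ at the spatial stopping time $\varsigma(\epsilon)=\tau(\partial\skrib(x,\epsilon))$: writing $H^x(X_\theta\in d\xi)=\lim_{\epsilon\downarrow 0}H^x(X_{\varsigma(\epsilon)+\theta}\in d\xi;\,\varsigma(\epsilon)+\theta<\sigma)$ and conditioning on $X_{\varsigma(\epsilon)}$ gives matching upper and lower bounds of the form
\[
\Big(\sup_{z\in\partial\skrib(x,\epsilon)}\ \text{resp.}\ \inf_{z\in\partial\skrib(x,\epsilon)}\ \frac{h(\xi)}{h(z)}\,p^{\skrib}_\theta(z,\xi)\Big)\;H^x(\varsigma(\epsilon)<\sigma)\,d\xi.
\]
As $\epsilon\downarrow 0$ the two normalisations dovetail: $h(z)\sim|\log\epsilon|$ by~\eqref{h} and $H^x(\varsigma(\epsilon)<\sigma)\sim a|\log\epsilon|$ by~\eqref{L5.1(i)}, so their ratio tends to $a$, while $p^{\skrib}_\theta(z,\xi)\to p^{\skrib}_\theta(x,\xi)$ by continuity. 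The constant $a$ therefore drops out in one stroke, with no small-$\theta$ asymptotics required. In your approach two loose ends remain. First, invoking uniqueness of $H^x$ to conclude $\mu_\theta=C\rho_\theta\,d\xi$ requires checking that $\rho_\theta\,d\xi$ extends to a path measure satisfying all three defining bullets (you verify only the Markov one; concentration at $x$ and finiteness of $H^x(\tau(A)<\infty)$ for compact $A$ away from $x$ are not addressed). Second, the $\theta\downarrow 0$ analysis of $\int h(y)\,p^{\skrib}_\theta(x,y)\,\P^y_h(\tau_\eps<\infty)\,dy$ is feasible---the $h$-singularity does cancel once one writes $\P^y_h(\tau_\eps<\infty)=h(y)^{-1}\E^y[h(B_{\tau_\eps});\,\tau_\eps<\tau]$ for $y\in\skrib(x,\eps)$---but it is appreciably more work than the paper's single $\epsilon\downarrow 0$ limit, which buys both the form of the entrance density and the constant simultaneously.
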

 
\begin{proof} We observe that
$$\begin{aligned}
H^x(\sigma(e)>\theta )&=\int_{\skrib \setminus \{x\}} H^x(X_\theta \in  d \xi)\\
&=\int_{\skrib \setminus \{x\}} \lim_{\epsilon \downarrow 0} H^x(X_{\varsigma(\epsilon)+\theta} \in  d \xi; \; \varsigma(\epsilon)+\theta<\sigma),
\end{aligned}$$
where $\varsigma(\epsilon)$ is the first hitting time of $\partial \skrib(x,\epsilon)$. By the strong Markov property of the excursion measure, 
%(see Blumenthal~(1992), Theorem 3.28) 
$$\begin{aligned}
H^x(\sigma(e)>\theta)&=\int_{\skrib \setminus \{x\}} \lim_{\epsilon \downarrow 0} H^x(   \prob^{X_{\varsigma(\epsilon)}}_h(X_\theta \in  d \xi; \; \theta <\sigma   ) ; \; \varsigma(\epsilon)<\sigma  )\\
&\le \int_{\skrib \setminus \{x\}} \lim_{\epsilon \downarrow 0} \sup_{z \in \partial \skrib (x,\epsilon)}  \prob^{z}_h(X_\theta \in  d \xi) \; H^x(\varsigma(\epsilon)<\sigma  )\\
&= \int_{\skrib \setminus \{x\}} \lim_{\epsilon \downarrow 0} \sup_{z \in \partial \skrib (x,\epsilon)}   \frac{h(\xi)}{h(z)}   p_\theta^{\skrib}(z,\xi)  \; H^x(\varsigma(\epsilon)<\sigma  )
 \;  d \xi,\\
\end{aligned}$$
and in view of  \eqref{h} and (\ref{L5.1(i)}), we have
$$\begin{aligned}
H^x(\sigma(e)>\theta)&\le \int_{\skrib \setminus \{x\}} h(\xi) \lim_{\epsilon \downarrow 0} \sup_{z \in \partial \skrib (x,\epsilon)}  p_\theta^{\skrib}(z,\xi)  \;  \frac{ H^x(\varsigma(\epsilon)<\sigma)}{|\log \epsilon |}  \;  d \xi\\
&= a \int_{\skrib \setminus \{x\}} h(\xi) \;  p_\theta^{\skrib}(x,\xi) \;  d \xi.
\end{aligned}$$
Analogously, we get
$$\begin{aligned}
H^x(\sigma(e)>\theta)&\ge \int_{\skrib \setminus \{x\}} \lim_{\epsilon \downarrow 0} \inf_{z \in \partial \skrib (x,\epsilon)}  \prob^{z}_h(X_\theta \in  d \xi) \; H^x(\varsigma(\epsilon)<\sigma  )\\
&= a \int_{\skrib \setminus \{x\}} h(\xi) \;  p_\theta^{\skrib}(x,\xi) \;  d \xi. \\[-9mm]
\end{aligned}$$
\end{proof}

The following lemma, see Port and Stone~(1978), Proposition 4.1,  will be used to replace $p^\ball_\theta$ by the Brownian transition
kernel in the main part of the integral.
\smallskip

\begin{lemma} \label{P-S}
Let $U$ be an open set and $x \in U$. Then there is a $\delta_0>0$ such that $\lim_{t \downarrow 0} p^{U}_t(\zeta,\xi)/p_t(\zeta,\xi)=1$ uniformly for 
all $\zeta,\xi \in \skrib(x,\delta_0)$.
\end{lemma}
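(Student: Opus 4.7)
The plan is to prove Lemma~\ref{P-S} by a direct heat-kernel estimate based on the strong Markov property at the first exit of Brownian motion from $U$. Since $U$ is open and $x\in U$, I would choose $\delta_0>0$ small enough that the closed ball $\overline{\skrib(x,4\delta_0)}$ is contained in $U$. This creates a buffer: for any $\zeta,\xi\in\skrib(x,\delta_0)$ one has $|\zeta-\xi|<2\delta_0$, while every boundary point $y\in\partial U$ satisfies $|y-\xi|>3\delta_0$.

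Writing $\tau_U$ for the first exit time from $U$, the strong Markov property yields the identity
\begin{equation*}
p_t(\zeta,\xi)-p^U_t(\zeta,\xi) = \E^\zeta\bigl[p_{t-\tau_U}(B_{\tau_U},\xi);\,\tau_U<t\bigr],
\end{equation*}
and the task reduces to showing that the right hand side is $o(p_t(\zeta,\xi))$ as $t\downarrow0$, uniformly in $\zeta,\xi\in\skrib(x,\delta_0)$. Using the explicit form $p_s(y,\xi)=(2\pi s)^{-1}\exp(-|y-\xi|^2/(2s))$, a direct computation shows that the function $s\mapsto s^{-1}\exp(-r^2/(2s))$ is increasing on $(0,r^2/2)$. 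Applying this with $r=|B_{\tau_U}-\xi|\ge 3\delta_0$, it follows that for $t\le 9\delta_0^2/2$, on the event $\{\tau_U<t\}$,
\begin{equation*}
p_{t-\tau_U}(B_{\tau_U},\xi)\le \frac{1}{2\pi t}\exp\!\Big(-\frac{9\delta_0^2}{2t}\Big).
\end{equation*}

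Combining this with the matching lower bound $p_t(\zeta,\xi)\ge(2\pi t)^{-1}\exp(-2\delta_0^2/t)$ coming from $|\zeta-\xi|<2\delta_0$, and using the trivial bound $\prob^\zeta(\tau_U<t)\le 1$, we obtain
\begin{equation*}
0\le 1-\frac{p^U_t(\zeta,\xi)}{p_t(\zeta,\xi)}\le \exp\!\Big(-\frac{5\delta_0^2}{2t}\Big)\longrightarrow 0,
\end{equation*}
uniformly in $\zeta,\xi\in\skrib(x,\delta_0)$, which is the assertion. The argument is entirely elementary; the only subtlety is to fix $\delta_0$ generously enough---so that $\overline{\skrib(x,4\delta_0)}\subset U$---to ensure that the Gaussian decay inherited from the exit to $\partial U$ dominates the Gaussian lower bound on the unkilled transition density. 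No deeper obstacle is anticipated; a sharper exit-time estimate (for instance $\prob^\zeta(\tau_U<t)\le C\exp(-9\delta_0^2/(2t))$ via the reflection principle) is available but not needed for this qualitative statement.
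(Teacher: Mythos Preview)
Your argument is correct. The identity $p_t(\zeta,\xi)-p^U_t(\zeta,\xi)=\E^\zeta[p_{t-\tau_U}(B_{\tau_U},\xi);\tau_U<t]$ is the standard Hunt formula, and your Gaussian comparison goes through exactly as written: the monotonicity of $s\mapsto s^{-1}e^{-r^2/(2s)}$ on $(0,r^2/2)$ together with $r\ge 3\delta_0$ and $t\le 9\delta_0^2/2$ gives the uniform bound $p_{t-\tau_U}(B_{\tau_U},\xi)\le (2\pi t)^{-1}e^{-9\delta_0^2/(2t)}$, and the ratio estimate $\exp(-5\delta_0^2/(2t))\to 0$ follows.

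Note that the paper does not actually prove this lemma; it simply quotes it as Proposition~4.1 of Port and Stone~(1978). Your self-contained argument is essentially the classical proof of that proposition, so there is no meaningful methodological difference to discuss --- you have just supplied the short derivation that the paper outsources to the reference.
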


We obtain the following asymptotics for  $H^x(\sigma(e)>\theta)$ as 
$\theta\downarrow 0$.

\begin{lemma}
$$H^x(\sigma(e)>\theta) \sim  a\, \sqrt{\frac{\pi}{2} \,} \log(1/ \theta)  \mbox{ as } \theta\downarrow 0.$$
\end{lemma}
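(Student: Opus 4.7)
The plan is to analyze the integral representation $H^x(\sigma(e)>\theta)=a\int_{\skrib\setminus\{x\}} h(\xi)\,p_\theta^{\skrib}(x,\xi)\,d\xi$ from the previous lemma, exploiting the concentration of $p_\theta^\skrib(x,\cdot)$ near $x$ as $\theta\downarrow 0$. First I would split the integration into $\skrib(x,\delta)\cap\skrib$ and its complement, for a fixed small $\delta<\delta_0$ with $\delta_0$ from Lemma \ref{P-S}. On the complement, $h$ is bounded---being continuous on $\overline{\skrib}\setminus\skrib(x,\delta)$ with vanishing boundary values on $\partial\skrib$---while $p_\theta^\skrib(x,\xi)\leq p_\theta(x,\xi)\leq\frac{1}{2\pi\theta}e^{-\delta^2/(2\theta)}$ for $|\xi-x|\geq\delta$, so the outside contribution decays exponentially in $1/\theta$ and is negligible.

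On $\skrib(x,\delta)$ I would apply Lemma \ref{P-S} to replace $p_\theta^\skrib$ by the free two-dimensional kernel $p_\theta$ with a multiplicative $1+o(1)$ error uniform in $\xi$; then by (\ref{h}) I would substitute $h(\xi)=(1+o(1))\,|\log|\xi-x||$ as $\xi\to x$. The error in the second substitution is bounded in modulus by a continuous function on $\skrib(x,\delta)$ that is $o(|\log|\xi-x||)$ near $x$, hence contributes only $O(1)$ once integrated against the subprobability density $p_\theta$, which is negligible compared with the $\log(1/\theta)$ scale of the leading term. What remains is the explicit Gaussian integral $\int_0^\delta |\log r|\,(r/\theta)\,e^{-r^2/(2\theta)}\,dr$ obtained by passing to polar coordinates.

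For this integral the substitution $u=r^2/(2\theta)$ yields $\tfrac12\int_0^{\delta^2/(2\theta)}|\log(2\theta u)|\,e^{-u}\,du$, which by dominated convergence splits into a dominant piece growing like $\log(1/\theta)$ (coming from $-\tfrac12\log(2\theta)\int_0^\infty e^{-u}\,du$) together with bounded correction terms involving $\int_0^\infty e^{-u}\log u\,du=-\gamma$ and the exponentially small tail $u>\delta^2/(2\theta)$. Multiplying through by the prefactor $a$ from the previous lemma produces the claimed asymptotic. The main obstacle will be careful bookkeeping of the three layers of approximation---truncation to $\skrib(x,\delta)$, kernel replacement via Lemma \ref{P-S}, and the asymptotic $h\sim|\log|\cdot||$---to verify that every remainder is uniformly $O(1)$ in $\theta$ and hence subdominant against $\log(1/\theta)$, so that the stated leading coefficient emerges unambiguously from the Gaussian computation.
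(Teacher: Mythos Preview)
Your proposal is correct and follows essentially the same route as the paper: truncate to $\skrib(x,\delta)$ (the outside contribution is bounded), invoke Lemma~\ref{P-S} and~\eqref{h} to reduce to a radial Gaussian integral, and extract the $\log(1/\theta)$ leading term by rescaling (the paper substitutes $s=r/\sqrt\theta$ where you use $u=r^2/(2\theta)$, a cosmetic difference).

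One minor bookkeeping point: from~\eqref{h} alone the error $h(\xi)-|\log|\xi-x||$ is only known to be $o(|\log|\xi-x||)$ as $\xi\to x$, and a continuous function on the open ball $\skrib(x,\delta)$ satisfying this need not be bounded, so your ``hence contributes only $O(1)$'' does not follow as stated. Integrating such an error against $p_\theta$ gives $o(\log(1/\theta))$, not $O(1)$, which still suffices; the paper sidesteps this by choosing $\delta$ so that $(1-\eps)\,|\log|\cdot-x||\le h\le(1+\eps)\,|\log|\cdot-x||$ on $\skrib(x,\delta)$ and carrying the factor $(1\pm\eps)$ multiplicatively through the Gaussian computation.
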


\begin{proof}
For any small $\delta>0$ we have
$$\begin{aligned}
H^x(\sigma(e)>\theta)&= a \int_{\skrib \setminus \{x\}} h(\xi) \;  p_\theta^{\skrib}(x,\xi) \;  d \xi \leq a \int_{\skrib(x,\delta) \setminus \{x\}} h(\xi) \;  p_\theta^{\skrib}(x,\xi) \;  d \xi + \mathrm{const},
\end{aligned}$$
since the integral over the complement of $\skrib(x,\delta)$ is bounded. 
In view of \eqref{h} and Lemma~\ref{P-S}, for every $\eps >0$, we find a small $\delta>0$  and $d=d(\delta)>0$ such that, for~$\theta<d$,
$$\begin{aligned}
H^x ( \sigma(e) > \theta)&\le (1+ \eps) \, a  \int_{\skrib(x,\delta) \setminus \{x\}}\big|\log |x-\xi|\big| \;  p_\theta(x,\xi) \;  d \xi + \mathrm{const}\\
&= (1+ \eps)\, a \, \sqrt{2\pi}  \, \frac{1}{\theta}\,   \int_0^\delta  (-\log r)  \;  \exp\{- r^2/ 2 \theta \} \; r \; d r + \mathrm{const},
\end{aligned}$$
and, changing variables~ $s^2=r^2/ \theta $, we get
$$\begin{aligned}
H^x ( \sigma(e) > \theta)&\le (1+ \eps) \, a \, \sqrt{2\pi}  \, \left[  \int_0^{\delta/ \sqrt{\theta}}  (-\log s)  \;  \exp\{- s^2/ 2 \} \; s \; d s \right. \\
&\hspace{0.4cm}- \left. \log \sqrt{\theta} \int_0^{\delta/ \sqrt{\theta}} \exp\{- s^2/ 2 \} \; s \; d s \right]+ \mathrm{const}\\
&\le  - (1+ \eps)\, a \, \sqrt{2\pi}  \, \log \sqrt{\theta} + \mathrm{const}.
\end{aligned}$$
Similarly, for the lower bound we have
$$\begin{aligned}
H^x ( \sigma(e) > \theta)%&\ge (1- \eps)\, a \, \sqrt{2\pi}  \, \frac1\theta\,  \int_0^\delta  (-\log r)  \;  \exp\{- r^2/ 2 \theta \} \; r \; d r + \mathrm{const}\\
&\ge - (1- \eps)\, a \, \sqrt{2\pi}  \, (1-{\mathrm e}^{-\frac{\delta^2}{2 \theta}})   \log \sqrt{\theta}+ \mathrm{const},\\
\end{aligned}$$
which completes the proof.\end{proof}

We now derive the asymptotics of $\Phi(\lambda)$ as $\lambda\uparrow\infty$ from~\eqref{grossphi}. 
For every fixed $d>0$ we have
$$\begin{aligned}
 \int_{{\mathrm e}^{-\lambda d}}^{1} d t \; H^x\big( \sigma(e) > \sfrac{-\log t}{\lambda} \big)
  \leq \Phi(\lambda)
% &=\int_0^{{\mathrm e}^{- \lambda d}} d t \; H^x\big( \sigma(e) > \sfrac{-\log t}{\lambda} \big)+\int_{{\mathrm e}^{-\lambda d}}^{1} d t \;  H^x\big( \sigma(e) > \sfrac{-\log t}{\lambda} \big)\\
 \le \mathrm{const}+\int_{{\mathrm e}^{-\lambda d}}^{1} d t \; H^x\big( \sigma(e) > \sfrac{-\log t}{\lambda} \big),
\end{aligned}$$
where the constant depends on $d$ but not on $\theta$.
Hence, for every $\eps>0$ there exists a small~$d>0$ such that
$$\begin{aligned}
-(1-\eps) \, a \, \sqrt{\sfrac{\pi}{2}}  \int_{{\mathrm e}^{-\lambda d}}^{1} \log (- \sfrac{\log t}{\lambda})  \, dt \leq 
\Phi(\lambda)&\leq \mathrm{const} -(1+\eps) \, a \, \sqrt{\sfrac{\pi}{2}}  \int_{{\mathrm e}^{-\lambda d}}^{1} \log (- \sfrac{\log t}{\lambda})  \, dt,
\end{aligned}$$
and, as $\log\log(1/t)$ is integrable over $(0,1)$, we get 
$$\Phi(\lambda)  \sim a \, \sqrt{\sfrac{\pi}{2}}\, \log \lambda \mbox{ as $\lambda\uparrow\infty$,}$$ 
which implies that $\Phi$ is slowly varying at infinity, and satisfies $\Phi(\infty)=\infty$.
\pagebreak[3]

%\pagebreak

We can now use Lemma \ref{F-P} and get \eqref{LIL} for the gauge function
$$\varphi(\eps)=a \, \sqrt{\sfrac{\pi}{2}}\,\frac{\log| \log \Phi(1/\eps)|}{\Phi(\eps^{-1} \log |\log \Phi(1/\eps)|)}\sim  
 \frac{\log\log \log(1/\eps)}{\log(1/\eps)}.$$

\section{Proof of the upper bound}

Fix a ball $\ball(0,R)$, $R>1$, and stop Brownian motion at the first exit time~$\tau$
from this ball. Given a cube $Q$ of side length~$r$ inside this ball, we define recursively
\begin{eqnarray*} 
\tau_1^Q&=&\inf\big\{t\ge 0 \, :\, B_t\in Q\big\},\\
\tau_{k+1}^Q&=&\inf\big\{t\ge \tau_k^Q+r \, :\, B_t\in Q\big\},
\end{eqnarray*}
with the usual convention that $\inf \emptyset=\infty$. 

\begin{lemma}\label{Kauftauk}
There exists $0<\theta<1$ such that for any $z\in\ball(0,R)$ and $0<r<\frac12$, 
%\begin{equation}
%\label{Kauf2de}
$$\prob^z\big(\tau_k^Q<\tau \big)\le \theta^{k}.$$
\end{lemma}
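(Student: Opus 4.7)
The plan is to iterate the strong Markov property at the stopping times $\tau_1^Q, \ldots, \tau_{k-1}^Q$ (at each of which $B$ lies in the closure of $Q$) and thereby reduce the claim to a uniform one-step estimate: there exists $\theta \in (0,1)$ such that for every cube $Q$ of side $r \in (0, 1/2)$ contained in $\ball(0,R)$ and every $y \in Q$,
\begin{equation*}
\prob^y(\tau^* < \tau) \le \theta, \qquad \text{where}\quad \tau^* := \inf\{t \ge r \colon B_t \in Q\}.
\end{equation*}
Iteration then yields $\prob^z(\tau_k^Q < \tau) \le \theta^{k-1}$, which gives the lemma after a cosmetic adjustment of the constant.

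\textbf{Bounding the one-step return.} A further application of the Markov property at the deterministic time~$r$ reduces the one-step estimate to controlling $\prob^z(\tau(Q) < \tau)$ uniformly over $z = B_r \in \ball(0,R)$. Writing $q$ for the centre of $Q$, the inclusions $Q \subset \ball(q, r)$ and $\ball(0,R) \subset \ball(q, 3R)$ together with the classical logarithmic hitting formula for planar Brownian motion give
\begin{equation*}
\prob^z(\tau(Q) < \tau) \;\le\; 1 \wedge \frac{\log(3R/|z-q|)}{\log(3R/r)}.
\end{equation*}
For $r$ small, write $B_r = y + \sqrt{r}\,W$ with $W$ a standard planar Gaussian. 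Since $|y-q| \le r/\sqrt{2}$ and $v \mapsto \E[\log|W+v|]$ is bounded uniformly for $|v| \le 1/\sqrt{2}$, this yields $\E^y[\log(3R/|B_r - q|)] \le \tfrac12 \log(1/r) + C_R$, so the expected ratio is at most
\begin{equation*}
\frac{\tfrac12 \log(1/r) + C_R + \log(3R)}{\log(3R) + \log(1/r)} \;\xrightarrow[r \to 0]{}\; \tfrac12.
\end{equation*}
Hence there is $r_0 \in (0, 1/2)$ such that the one-step bound is at most $3/4$ whenever $r \le r_0$.

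\textbf{Moderate $r$ and conclusion.} For $r \in [r_0, 1/2)$ the logarithmic estimate becomes vacuous and we invoke compactness. The parameter space of admissible triples $(r, q, y)$ with $r \in [r_0, 1/2]$, $q$ the centre of a closed cube of side $r$ contained in $\ball(0,R)$, and $y \in Q$, is compact; the one-step return probability is continuous on it (by the Feller property and regularity of cube boundaries for planar Brownian motion), and strictly less than $1$ everywhere, since from any $y \in Q$ there is positive probability of reaching $\partial \ball(0,R)$ without returning to $Q$ after time $r$ (this combines $\prob^y(\tau \le r) > 0$ with the fact that from any $z \in \ball(0,R)\setminus Q$ one has positive probability of exiting $\ball(0,R)$ before hitting $Q$). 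Its supremum is therefore some $\theta_B < 1$, and taking $\theta := \max(3/4, \theta_B) \in (0,1)$ completes the argument. The principal obstacle is precisely this intermediate regime: neither the log-potential bound nor Brownian scaling gives a nontrivial estimate once $r$ is of order one, and the soft compactness argument is required to patch the gap between the genuine "$r \to 0$" asymptotic and "$r$ close to $1/2$".
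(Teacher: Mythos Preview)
Your proposal is correct, and the core idea---iterate via the strong Markov property and exploit that after waiting time $r$ the path is typically at distance of order $\sqrt r$ from the centre of $Q$, so the logarithmic hitting formula gives a return probability bounded away from~$1$---is the same as the paper's. The execution differs, however. You average the logarithmic bound over the Gaussian law of $B_r$ to obtain an expected return ratio tending to $\tfrac12$ as $r\downarrow 0$, which forces a separate soft compactness argument to cover the range $r\in[r_0,\tfrac12)$. The paper instead conditions on the event $\{|B_{\tau_k^Q+r}-x|>2\sqrt r\}$, whose probability is bounded below uniformly in $r$, and then bounds the escape probability from the annulus $\ball(x,2\sqrt r)\setminus\ball(x,r)$ by the explicit ratio $\frac{\log(2\sqrt r)-\log r}{\log(2R)-\log r}$, which is bounded away from zero \emph{for all} $r\in(0,\tfrac12)$ simultaneously. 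This single choice of intermediate scale $2\sqrt r$ makes the bound uniform in one stroke and removes the need for your compactness patch. Your averaging argument also needs slightly more care than written: dropping the indicator $\1_{\{\tau>r\}}$ before taking the expectation of $\log(3R/|B_r-q|)/\log(3R/r)$ is not monotone in the right direction when $|B_r-q|>3R$, though the correction is exponentially small in $1/r$ and harmless. Overall your route works but is longer; the paper's conditioning on a fixed intermediate scale is what buys uniformity across all side lengths at once.
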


\begin{proof}
It suffices to bound $\prob^z(\tau_{k+1}^Q\ge\tau\, |\, \tau_k^Q<\tau)$
from below by
$$\prob^z \big( \tau_{k+1}^Q\ge\tau \, \big|\, |B_{\tau^Q_k+r}-x|>2\sqrt{r}, 
\tau_k^Q<\tau \big) \, \prob^z \big( | B_{\tau^Q_k+r}-x|>2\sqrt{r} \, 
\big| \, \tau_k^Q <\tau \big), $$
where $x$ is the centre of~$Q$.
The second factor  can be bounded
from below by a constant not depending on~$r$. The first factor is bounded from below by 
the probability that planar Brownian motion started at any point
in $\partial \ball(0,2\sqrt{r})$ hits $\partial \ball(0,2R)$ before 
$\partial \ball(0,r)$. This  probability is given by
$$\frac{\log 2\sqrt{r} -\log r}{\log 2R - \log r} \ge 
\frac{-\frac12 \log r}{-\log r + \log 2R },\phantom{\Bigg\}}$$
which is bounded from zero by a positive constant independent of $r$.
\end{proof}

\begin{lemma}\label{Kaufcor}
Let ${\mathfrak C}_m$ be the set of dyadic cubes of side length $2^{-m}$
inside a fixed unit cube $U\subset\ball(0,R)$. Almost surely there exists a (random) 
integer $C$ so that for all $m\geq 1$ and cubes $Q\in {\mathfrak C}_m$ 
and $r=2^{-m}$ we have $\tau_{mC}^Q>\tau$.
\end{lemma}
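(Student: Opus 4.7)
\begin{Proof}{Proof proposal.}
The plan is a standard Borel--Cantelli covering argument built directly on Lemma~\ref{Kauftauk}. For each scale $m \ge 1$, the unit cube $U$ contains exactly $2^{2m}$ dyadic cubes of side length $r = 2^{-m}$, and for each such cube $Q \in \mathfrak{C}_m$ Lemma~\ref{Kauftauk} gives
$$\prob^0\big(\tau_{mC}^Q < \tau\big) \le \theta^{mC}.$$
A union bound over $\mathfrak{C}_m$ therefore yields
$$\prob^0\Big(\exists\, Q\in\mathfrak{C}_m \colon \tau_{mC}^Q < \tau\Big) \le 2^{2m}\,\theta^{mC} = \big(2^2\theta^C\big)^m.$$

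Now I choose $C$ large enough that $2^2\theta^C<1$, i.e.\ $C > 2/\log(1/\theta)$. With this choice the right-hand side decays geometrically in $m$ and is therefore summable in $m$. By the first Borel--Cantelli lemma, almost surely there are only finitely many scales $m$ at which some cube $Q \in \mathfrak{C}_m$ satisfies $\tau_{mC}^Q \le \tau$. Denote by $M$ the (random) largest such scale, so that for every $m>M$ and every $Q\in\mathfrak{C}_m$ we already have $\tau_{mC}^Q > \tau$.

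To convert this into the uniform statement with a single constant valid for every $m \ge 1$, I would simply enlarge $C$. For each of the finitely many exceptional scales $m \le M$ there are only finitely many cubes in $\mathfrak{C}_m$, and on each of them the Brownian path can make at most a finite number of $r$-separated visits before exiting $\ball(0,R)$ at time $\tau$ (since $\tau<\infty$ a.s.). Thus there is an a.s.\ finite random bound $K$ on $\tau_k^Q$-visits at those scales, and replacing $C$ by any (random) integer larger than both the initial choice and $K$ delivers the conclusion as stated.

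The only mildly delicate point is the correct bookkeeping between deterministic and random quantities: the value of $C$ must be deterministic enough for the union bound to give summability, but is finally allowed to be random in order to absorb the finitely many exceptional low scales. There is no serious obstacle beyond that; the geometric decay built into Lemma~\ref{Kauftauk} comfortably beats the polynomial $2^{2m}$ growth in the number of cubes, which is precisely why a single multiplicative constant~$C$ in the exponent suffices.
\end{Proof}
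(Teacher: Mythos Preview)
Your proposal is correct and follows essentially the same route as the paper's proof: apply Lemma~\ref{Kauftauk}, take a union bound over the $2^{2m}$ cubes at each scale, choose a deterministic constant so that $4\theta^C<1$ to get summability, invoke Borel--Cantelli, and then enlarge $C$ to a random integer to absorb the finitely many exceptional low scales. The paper's write-up is slightly terser (it sums directly over $m$ and $Q$ rather than first taking the union over $Q$), but the argument is identical.
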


\begin{proof} From Lemma~\ref{Kauftauk} we get that, for any positive integer $c$,
$$\sum_{m=1}^\infty
\sum_{Q\in {\mathfrak C}_m}\prob\big( \tau^Q_{ cm }<\tau \big)
\le \sum_{m=1}^\infty 2^{2m} \theta^{cm}.$$
Now choose $c$ so large that $4\theta^c<1$. Then, by the Borel-Cantelli lemma, for
all but finitely many $m$ we have $\tau^Q_{cm}\geq \tau$ for all 
$Q\in {\mathfrak C}_m$. Finally, we can choose a random $C>c$ to handle the 
finitely many exceptional cubes.
\end{proof}

To complete the proof we note that, on the event in the lemma, for a given $m$ we can cover
any set $\{ 0<t<\tau \colon B_t\in Q\}$, $Q\in{\mathfrak C}_m$,
with no more than $Cm$ intervals of length $r=2^{-m}$. This implies that, for any $z\in U$,
$${\mathcal H}^\phi\big(\{ 0<t<\tau \colon B_t=z\}\big) 
\leq \lim_{m\to\infty} Cm \phi(2^{-m}) =0,$$
under the assumption on $\phi$. Theorem~\ref{upper} follows as $U\subset\ball(0,R)$
and $R>1$ were arbitrary.

\section{Outlook}

As mentioned in the introduction, the strong form of Taylor's Problem~5 remains unresolved in this paper. 
Our work however allows us to make a conjecture. We believe
that our lower bound is sharp, or in other words that for every gauge function $\phi$ with 
$\phi(\eps)/\varphi(\eps) \to 0$ where $\varphi$ is as in Theorem~\ref{upper}, almost surely, 
%the level set~$L(x)$ has zero $\phi$-Hausdorff measure. 
$${\mathcal H}^\phi\big(\{t\geq 0 \colon B_t=x\}\big)=0\qquad \mbox{for every $x\in\R^2$.} $$
The reason for this belief is that our upper bound uses coverings of the level sets  by intervals 
of fixed size. If we were able to adapt the size of the covering intervals to the fluctuations of the
local time, one would expect a gain of order $\log\log\log(1/\eps)$, similar to our lower bound. 
However, the technical difficulties in carrying out such an estimate appear to be considerable and we 
therefore defer verification of our conjecture to future work.
\bigskip


\begin{thebibliography}{BBK94}

\bibitem[BBK94]{BBK94}
{\sc Bass, R.F., K.~Burdzy} and {\sc D.~Khoshnevisan}
\newblock Intersection local times for points of infinite multiplicity. 
\newblock {\em Ann. Probab.} 22:566--625 (1994).
\smallskip

\bibitem[BC95]{BC95}
{\sc Bertoin, J.} and {\sc M.E.~Caballero}
\newblock On the rate of growth of subordinators with slowly varying Laplace exponent. 
\newblock In: {\em S\'eminaire de Probabilit\'es XXIX}, Springer Lecture Notes in Mathematics Vol~1613, pp 125--132 (1995).
\smallskip

%\bibitem[B92]{B92}
%{\sc Blumenthal, R.}
%\newblock {\em Excursions of Markov processes}.
%\newblock Birkh\"auser, Boston, 1992.
%\smallskip

\bibitem[B87]{B87}
{\sc Burdzy, K.}
\newblock {\em Multidimensional Brownian excursions and potential theory}.
\newblock Longman, New York, 1987.
\smallskip

%\bibitem[DP01]{DP01}
%{\sc Dembo, A., Y.~Peres, J.~Rosen} and {\sc O.~Zeitouni}
%\newblock {Thick points for planar Brownian motion and the Erd\H{o}s-Taylor conjecture on random walk.}
%\newblock {\em Acta Math.} 186 (2001) 239--270.
%\smallskip

\bibitem[DEK58]{DEK58}
{\sc Dvoretzky, A., P.~Erd\H{o}s} and {\sc S.~Kakutani}
\newblock Points of multiplicity $\mathfrak{c}$ of planar Brownian motion.
\newblock \emph{Bull. Res. Council Israel} {\bf 7}~(1958) 157--180.
\smallskip

\bibitem[FP71]{FP71}
{\sc Fristedt, B.E.} and {\sc W.E. Pruitt}
\newblock Lower functions for increasing random walks and subordinators. 
\newblock {\em Z. Wahrscheinlichkeitstheorie} 18 (1971) 167--182.
\smallskip

%\bibitem[HSU86]{HSU86}
%{\sc Hsu, P.}
%\newblock Brownian exit distribution of a ball. 
%\newblock {\em Progr. Probab. Statist.} 12 (1986) 108--116.
%\smallskip


\bibitem[K93]{K93}
{\sc Kingman, J.F.C.}
\newblock {\em Poisson Processes}.
\newblock Oxford University Press, Oxford, 1993.
\smallskip

%\bibitem[L65]{L65}
%{\sc Lebedev, J.F.C.}
%\newblock {\em Special Functions and Their Applications}.
%\newblock Prentice-Hall, Inc., 1965.
%\smallskip

\bibitem[LG87]{LG87}
{\sc Le Gall, J.-F.} 
\newblock Le comportement du mouvement brownien entre les deux instants o\`u
il passe par un point double.
\newblock {\em J. Funct. Anal.} {\bf 71}~(1987) 246--262.
\smallskip

\bibitem[MP10]{MP10}
{\sc M\"orters, P.} and {\sc Y.~Peres}
\newblock {\em Brownian motion}.
\newblock Cambridge University Press, Cambridge, 2010.
\smallskip

\bibitem[PS78]{PS78}
{\sc Port, S. C.} and {\sc C. J. Stone}
\newblock {\em Brownian motion and classical potential theory}.
\newblock Academic Press, New York, 1978.
\smallskip

\bibitem[Ta86]{Ta86}
{\sc Taylor, S.J.}
\newblock The measure theory of random fractals.
\newblock {\em Math. Proc. Cambridge Phil. Soc.} 100 (1986) 383--406.
\smallskip


\bibitem[PT87]{PT87}
{\sc Taylor, S.J.} and {\sc E. A. Perkins}
\newblock Uniform measure results for the image of subsets under Brownian motion. 
\newblock {\em Probab. Theory Rel. Fields} 76 (1987) 257--289.
\smallskip

%\bibitem[TS90]{TS90}
%{\sc Tikhonov, A. N.} and {\sc Samarskii, A. A.}
%\newblock {\em Equations of mathematical physics}.
%\newblock Dover Publications, Inc., 1990.
%\smallskip

\bibitem[Xi04]{Xi04}
{\sc Xiao, Y.}
\newblock Random fractals and Markov processes. %
\newblock In: \emph{Fractal Geometry and Applications: A Jubilee of Beno\^\i t Mandelbrot.}
\newblock Proc. Symposia Pure Math. {\bf 72}, 261--338, American Mathematical Society, Providence, 2004.
\smallskip

\end{thebibliography}
\end{document}